\newtheorem{theorem}{Theorem}
\newtheorem{lemma}[theorem]{Lemma}
\newtheorem{prop}[theorem]{Proposition}
\newtheorem{cor}[theorem]{Corollary}
\theoremstyle{definition}
\theoremstyle{remark}
\numberwithin{equation}{section}
\newcommand{\Gab}{G  -  a,b}
\newcommand{\maxd}[1]{\Delta(#1)}
\newcommand{\mind}[1]{\delta(#1)}
\newcommand{\CP}{\mathcal P}
\newcommand{\R}{\mathbb R}
\newcommand{\bibtitle}[1]{\emph{#1}}
\newcommand{\dfn}[1]{\textbf{#1}}
\newcommand{\ty}{\nabla\mathrm{Y}}
\newcommand{\yt}{\mathrm{Y}\nabla}
\begin{document}

\title{Order nine MMIK graphs}



\author{Thomas W.\ Mattman}
\address{Department of Mathematics and Statistics,
California State University, Chico,
Chico, CA 95929-0525}
\email{TMattman@CSUChico.edu}

\author{Chris Morris}
\address{
Computer Science Department, California State University, Chico,
Chico, CA 95929-0410}
\email{chris@chrismorris.net}
\author{Jody Ryker}
\address{Department of Mathematics,
University of California Santa Cruz, Santa Cruz, CA 95064}
\email{jryker@ucsc.edu}

\subjclass[2000]{05C10}

\date{}

\begin{abstract}
We show that there are exactly eight MMIK (minor minimal intrinsically knotted) graphs 
of order nine.
\end{abstract}

\maketitle

\section*{Introduction}
A graph is \dfn{intrinsically knotted (IK)} if every tame embedding in $\R^3$ has a nontrivially
knotted cycle. Since the opposite property `not intrinsically knotted' is closed under 
taking minors~\cite{NT}, it follows from the Graph Minor Theorem of Roberston and Seymour~\cite{RS} that the set of IK graphs is characterized by a finite set of MMIK (minor minimal IK) graphs.
Recall that a \dfn{minor} of a graph $G$ is any graph obtained by contracting edges in a subgraph
of $G$. We say that graph $G$ is \dfn{minor minimal} 
with respect to graph property $\CP$ if $G$ has 
$\CP$, but no proper minor does.

In their seminal paper Conway and Gordon~\cite{CG} demonstrate 
that $K_7$ is IK and Kohara and Suzuki~\cite{KS} subsequently showed it
is MMIK.
This means $K_7$ is the unique MMIK graph on seven vertices
and any graph of lesser order is not IK. Two groups, working independently, classified the
MMIK graphs on eight vertices~\cite{BBFFHL, CMOPRW}; there are exactly two:
$K_{3,3,1,1}$ and the graph obtained by a single $\ty$ exchange on $K_7$.
We take the next step by proving the following.

\begin{theorem}
\label{thmain}%
There are exactly eight MMIK graphs of order nine.
\end{theorem}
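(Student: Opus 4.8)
The plan is to reduce Theorem~\ref{thmain} to a finite search and then verify each surviving candidate by hand and by machine. First I would fix an MMIK graph $G$ of order nine and extract structural constraints. Minor minimality forces $\mind{G}\ge 3$: a vertex of degree at most two could be deleted or smoothed to produce a proper minor that is still IK, contradicting minimality. Combined with the known lower bound that every IK graph has at least $21$ edges, and an upper bound coming from the fact that sufficiently dense graphs on nine vertices already contain a proper IK minor (so cannot be minor minimal), this should confine $G$ to a narrow band of edge counts; I expect exactly $21$ or $22$ edges, matching the $K_7$ and $K_{3,3,1,1}$ families. This turns the problem into the enumeration of a finite, explicitly searchable set of graphs.

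Second, I would filter these candidates with the $2$-apex obstruction. Since a graph with $\Gab$ planar for some pair of vertices $a,b$ is $2$-apex and hence not IK, I would run a planarity test on $\Gab$ over all $\binom{9}{2}$ pairs and discard every graph that passes for some pair. The survivors form the pool of genuine IK candidates. The bookkeeping here is routine; the content is that this necessary condition eliminates the overwhelming majority of graphs in the band.

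Third, I would organize the survivors using $\ty$ and $\yt$ exchanges to connect them to the already-classified MMIK graphs of orders seven and eight. Because a $\ty$ move preserves IK and raises the order by one, applying $\ty$ to $K_{3,3,1,1}$ and to the $\ty$-child of $K_7$ manufactures order-nine IK graphs, whose minor minimality I would confirm directly; these should supply the $K_7$ family (two $\ty$ moves from $K_7$) and the $K_{3,3,1,1}$ family (one $\ty$ move). For the reverse matching, any survivor with a degree-three vertex admits a $\yt$ move to order eight that I would compare against the order-eight list, bearing in mind that $\yt$ need not preserve IK, so the comparison guides rather than decides. Minor minimality of each claimed graph I would certify by checking that every single-edge deletion and contraction yields a non-IK (typically $2$-apex) graph, which suffices since ``not IK'' is minor-closed.

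The main obstacle is the positive direction together with completeness. The $2$-apex test is only necessary for non-IK, so some survivors of the filter are in fact not IK, and for each such graph I must produce a certificate—a knotless embedding or a finer obstruction—rather than lean on the filter. Symmetrically, for the eight graphs I claim are MMIK I must prove they are genuinely IK, which for the non-$\ty$-descendant cases cannot be inherited and requires direct argument. Marshalling these certificates so that every graph in the finite band is provably sorted into ``IK and minor minimal'' or ``not MMIK,'' and thereby confirming the count is exactly eight, is where the real work concentrates.
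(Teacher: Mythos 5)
There is a genuine gap, and it is concrete: your expected edge band of ``exactly $21$ or $22$ edges'' is wrong, and an argument built on it would miss one of the eight graphs. The density bound that actually holds (Lemma~\ref{lemCMOPRW}, via Mader's theorem on forced $K_7$ minors) only rules out order-nine graphs with $\|G\|\geq 31$; it does not push the ceiling anywhere near $22$. Indeed the graph $G_{9,28}$, with $28$ edges, is one of the eight MMIK graphs of order nine, so the heuristic ``sufficiently dense graphs on nine vertices contain a proper IK minor'' fails for a full range of sizes. The true band is $21\leq\|G\|\leq 30$, and the bulk of the work in the paper is precisely the part your plan elides: a hand analysis of sizes $28$--$30$ (where $G_{9,28}$ appears and where one must also dispose of a $(9,29)$ graph, called $260910$, that is not $2$-apex yet has an unknotted embedding), a vertex-degree case analysis at size $22$ built on the classification of small MMNA graphs (the Petersen family), and a computer search, cross-checked by two independent methods, for sizes $23$ through $27$. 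Your proposal as written would terminate with seven graphs and a false count.

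A second, related weakness is that your filtering step treats ``not $2$-apex'' as nearly sufficient, with certificates to be supplied ``where the real work concentrates.'' The example of $260910$ and its subgraphs shows this is not a corner case: there are order-nine graphs in the band that are not $2$-apex and still not IK, and the paper must exhibit an explicit unknotted embedding (due to Naimi) to dispose of them. Likewise, several candidates are IK but fail minor minimality only because they contain one of the order-nine MMIK graphs (e.g.\ $A_9$, $B_9$, $E_9+e$) as a proper minor, so the classification is not cleanly inherited from orders seven and eight by $\ty$/$\yt$ bookkeeping alone; the paper leans on the full $K_7$ and $K_{3,3,1,1}$ families of~\cite{GMN} and on Miller--Naimi's IK-detection program for three of the positive cases. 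Your outline correctly identifies the shape of the difficulty but does not supply the decomposition by edge count, nor the $28$-edge example, that the proof actually requires.
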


The eight graphs have all been described elsewhere, including proofs that they 
are MMIK. In this paper we show there are no other examples. Two of the graphs
are in the $K_7$ family, first described by Kohara and Suzuki~\cite{KS}; they call 
those graphs $F_9$ and $H_9$ and proved that they are MMIK.

The remaining examples are described in~\cite{GMN}; in this paragraph we
summarize the relevant ideas from that paper. The \dfn{family} of a graph $G$ 
is the set of graphs that can be obtained from
$G$ by a sequence of $\ty$ and $\yt$ moves. The $K_{3,3,1,1}$ family
contains four MMIK graphs of order nine. Two of these come from $\ty$ moves
on $K_{3,3,1,1}$ and were known to be MMIK by combining work of Foisy~\cite{F}
with Kohara and Suzuki~\cite{KS}. Returning to~\cite{GMN}, the authors
show that the other two order nine graphs in that family are also MMIK. For this, they use
Miller and Naimi's~\cite{MN} program to argue that the two graphs are IK.
The final two MMIK graphs of order nine are called
$E_9+e$ and $G_{9,28}$ in \cite{GMN}. They have a traditional proof 
that $G_{9,28}$ is MMIK while again relying on the program of~\cite{MN}
to verify that $E_{9+e}$ is IK. In summary, Miller and Naimi's program is used to show that 
three of the order nine graphs are IK. Even in those cases, 
the authors provide a traditional proof that those graphs are minor minimal IK. 
See~\cite{GMN} for details.

Our proof that there are no other instances of order nine MMIK graphs also relies on 
computers. Let us first outline the parts of the argument
that are independent of the computer. In her thesis~\cite{R}, the third author proves
the following.
\begin{prop}
\label{prop28}%
The only order nine MMIK graph of size 28 or greater is $G_{9,28}$.
\end{prop}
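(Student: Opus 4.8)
The plan is to bound the size from above so as to reduce to a narrow range, and then to run a finite analysis organized through the complement $\overline{G}$. First I would observe that an order nine MMIK graph $G$ has no $K_7$ minor: such a minor would necessarily be proper, as $G$ has nine vertices and $K_7$ has seven, and $K_7$ is IK, contradicting minor minimality. A classical theorem of Mader states that a graph on $n$ vertices with no $K_7$ minor has at most $5n-15$ edges, so $|E(G)| \le 5(9)-15 = 30$. With the hypothesis $|E(G)| \ge 28$ this leaves only the sizes $28$, $29$, and $30$. I would also use the standard fact that a minor minimal IK graph satisfies $\mind{G} \ge 3$, since a vertex of degree at most two may be deleted or smoothed without affecting knottedness; dually, in the complement $\overline{G}$, which has $36-|E(G)| \in \{6,7,8\}$ edges, every vertex satisfies $\maxd{\overline{G}} \le 5$.

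Because $\overline{G}$ has at most eight edges and maximum degree at most five, only finitely many complements arise up to isomorphism, and I would prune this list using the obstructions already available. For $G$ to have no $K_7$ minor, $\overline{G}$ can neither leave seven mutually nonadjacent vertices nor admit seven pairwise adjacent branch sets after contraction, and these requirements remove most six-, seven-, and eight-edge complements. I would also exploit structure when it is present: at size $30$ the graph $G$ is edge maximal among $K_7$-minor-free graphs, and whenever $G$ has a universal vertex $v$ the problem drops by one vertex, since the join $v*(G-v)$ has a $K_7$ minor precisely when $G-v$ has a $K_6$ minor, which is again controlled by Mader's bound, now on eight vertices.

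For each surviving candidate $H$ I would produce one of two certificates. If $H$ admits a knotless embedding, most conveniently because it is $2$-apex, meaning that deleting two vertices leaves a planar graph on seven vertices (a condition checkable by hand through Kuratowski), then $H$ is not IK, since $2$-apex graphs are not intrinsically knotted. Otherwise I would display a proper minor of $H$ isomorphic to $K_7$, to $K_{3,3,1,1}$, to one of the two order eight MMIK graphs, or to $G_{9,28}$; such a minor forces $H$ to be IK but not minor minimal. Either way $H$ fails to be MMIK. The unique candidate escaping both certificates is $G_{9,28}$, which is known to be MMIK by \cite{GMN}, and the proposition follows.

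The main obstacle is the completeness and bookkeeping of the case analysis, together with certifying knottedness by hand. Once a candidate is fixed, building the $2$-apex embeddings and locating the proper MMIK minors is routine; the delicate points are guaranteeing that every complement with six, seven, or eight edges has been examined and, above all, detecting the $K_7$ minors that arise through contraction rather than as subgraphs, since it is exactly these that separate the genuinely near-extremal graphs from $G_{9,28}$.
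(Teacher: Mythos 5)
Your skeleton---cap the size at $30$ via Mader, use $\mind{G}\geq 3$, reduce to a finite enumeration organized by the complement, and certify each candidate as either $2$-apex or as having a proper IK minor---is essentially the paper's strategy for the $(9,28)$, $(9,29)$, $(9,30)$ cases. But your closing dichotomy has a genuine gap: it is not true that $G_{9,28}$ is the only candidate escaping both certificates. The paper exhibits a $(9,29)$ graph, called 260910 (its complement is the disjoint union of a $6$-cycle, $K_2$, and $K_1$, so it passes your $\maxd{\overline{G}}\leq 5$ filter), together with two of its $(9,28)$ subgraphs, all of which are \emph{not} $2$-apex and yet \emph{not} intrinsically knotted. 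Being non-IK, they certainly have no proper IK minor; being non-$2$-apex, they have no planarity certificate. They therefore survive your entire sieve alongside $G_{9,28}$, and your assertion of uniqueness fails as stated. Disposing of them requires producing an explicit unknotted embedding and verifying that every cycle is unknotted---the paper relies on an embedding due to Naimi for exactly this purpose---and this is the genuinely non-routine step, not the bookkeeping you flag as the main obstacle. Your hedge ``most conveniently because it is $2$-apex'' gestures at this, but without recognizing that the harder certificate is actually needed, the proof does not close.

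A secondary issue: your list of target minors ($K_7$, $K_{3,3,1,1}$, the two order-eight MMIK graphs, $G_{9,28}$) is too short. Most of the IK-but-not-minimal graphs in this range are certified in the paper via an $A_9$, $B_9$, or $F_9$ subgraph, and these order-nine MMIK graphs are not subsumed by your list: a $\ty$ child such as $A_9$ need not contain its parent $K_{3,3,1,1}$ as a minor, since the triangle destroyed by the move is not recoverable by contraction alone. This is repairable by enlarging the list, but as written some IK candidates would receive no certificate.
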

The classification of MMIK graphs of size 21 or less is due, independently, 
to two groups and has the following corollary.
\begin{cor} \cite{BM,LKLO}
\label{cor21}%
There are exactly two order nine MMIK graphs of size at most 21, $F_9$ and 
$H_9$.
\end{cor}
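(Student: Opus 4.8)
The plan is to deduce the corollary directly from the classification of small MMIK graphs obtained independently in~\cite{BM} and~\cite{LKLO}, which lists every MMIK graph of size at most $21$. Since the statement is a consequence of that classification, the work amounts to recalling what the list contains and then reading off exactly those graphs that have nine vertices.

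First I would observe that the constraint on size collapses: it is known that every intrinsically knotted graph has at least $21$ edges, with $K_7$ attaining this minimum, so an MMIK graph of size at most $21$ in fact has size \emph{exactly} $21$. (Indeed, any IK graph with $21$ edges is automatically MMIK, since every proper minor has strictly fewer edges and hence is not IK.) The classification of~\cite{BM,LKLO} then identifies the size-$21$ MMIK graphs as precisely the intrinsically knotted members of the $K_7$ family introduced by Kohara and Suzuki~\cite{KS}, which reduces the corollary to a question about one explicitly described family.

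Next I would track how the generating moves act on the two relevant parameters. A $\ty$ move deletes the three edges of a triangle and inserts a new degree-three vertex adjacent to that triangle, so it raises the order by one while keeping the size fixed at $21$; the inverse $\yt$ move lowers the order by one. Since $K_7$ has order seven, I would then locate the order-nine graphs within the family; Kohara and Suzuki show that, up to isomorphism, the order-nine members that are MMIK are exactly $F_9$ and $H_9$. It remains only to confirm that $F_9$ and $H_9$ are non-isomorphic and that no further order-nine member is MMIK, which yields the count of exactly two.

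The real difficulty is entirely contained in the cited classification~\cite{BM,LKLO}: establishing the $21$-edge lower bound and proving that the size-$21$ MMIK graphs are exhausted by the $K_7$ family. Granting those results, the only step demanding care here is the bookkeeping on the family---checking completeness of the list of order-nine members and the non-isomorphism of $F_9$ and $H_9$---and this is routine given the explicit description in~\cite{KS}.
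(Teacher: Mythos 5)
Your proposal is correct and matches the paper's treatment: the paper offers no independent proof of this corollary, deriving it exactly as you do from the classification of size-$21$ MMIK graphs in~\cite{BM,LKLO} and reading off the two order-nine members $F_9$ and $H_9$ of the $K_7$ family. (Your parenthetical that a $21$-edge IK graph is automatically MMIK tacitly assumes no isolated vertices, but that is harmless here.)
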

In the current paper, we give a conventional proof for graphs on 22 edges.
\begin{prop}
\label{prop22}%
There are exactly five order nine MMIK graphs of size 22.
\end{prop}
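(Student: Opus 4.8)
The plan is to take an arbitrary order-nine MMIK graph $G$ of size $22$ and force it to coincide with one of the five graphs already shown to be MMIK in~\cite{GMN} (the four members of the $K_{3,3,1,1}$ family together with $E_9+e$); since those five are known to be MMIK, only their exhaustiveness is at issue. Two global constraints will drive the whole argument. First, an MMIK graph has minimum degree at least three, so the nine degrees lie in $[3,8]$ and sum to $2\cdot 22=44$. Second, the MMIK graphs form an antichain under the minor order, so $G$ has no minor equal to $K_7$, to $K_{3,3,1,1}$, or to the order-eight graph $\ty(K_7)$; this ``no small MMIK minor'' condition is the principal sieve.

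I would then split on whether $G$ has a cubic vertex. Suppose $v$ has degree three with neighborhood $\{a,b,c\}$, and first treat the generic subcase in which $a,b,c$ are pairwise non-adjacent. Then $H=\yt_v(G)$, obtained by deleting $v$ and inserting the triangle $abc$, is an order-eight graph on the same $22$ edges with $G=\ty(H)$. A local surgery argument shows that a knotless embedding of $H$ yields one of $G$ (the $\mathrm{Y}$ can be realized inside the former triangle without introducing a knot), so $G$ being IK forces $H$ to be IK. Since the only order-eight MMIK graph of size $22$ is $K_{3,3,1,1}$, and a routine check confirms that $G$ inherits any $K_7$ or $\ty(K_7)$ minor of $H$, the antichain condition rules out those alternatives and yields $H=K_{3,3,1,1}$, whence $G=\ty(K_{3,3,1,1})$. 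Enumerating the triangles of $K_{3,3,1,1}$ up to its automorphisms produces exactly the four family graphs. The remaining cubic subcases, where $\{a,b,c\}$ spans one or more edges, lower the $\yt$-image below $22$ edges and are eliminated against the size-$\le 21$ classification of Corollary~\ref{cor21}.

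It then remains to dispose of the graphs with minimum degree at least four. Here the degree sequences summing to $44$ are few, and a degree-eight (apex) vertex $v$ is excluded at once: its deletion leaves an order-eight graph on only $14$ edges, fewer than the $15$ carried by every member of the Petersen family, so $G-v$ is linklessly embeddable; coning such a flat embedding over $v$ produces a knotless embedding of $G$, contradicting that $G$ is IK. Running through the surviving sequences and, for each candidate, either locating a forbidden $K_7$ or $K_{3,3,1,1}$ minor or exhibiting an explicit knotless embedding, should collapse the list to the single graph $E_9+e$.

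The hard part is exactly this last, minimum-degree-$\ge 4$ stage. Unlike the cubic case, it is not organized by a clean $\ty$/$\yt$ reduction to a smaller known graph, so each candidate must be settled directly; moreover the lone survivor $E_9+e$ lies in no convenient family, and its intrinsic knottedness was established only by the computer search of~\cite{MN,GMN}. The delicate work is therefore twofold: certifying that every competitor of the same degree type admits a knotless embedding, and confirming that $E_9+e$ is the unique graph resisting all such certificates.
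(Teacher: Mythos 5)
Your proposal has a fatal flaw at its central step, and it also omits the structural tool that makes the case analysis tractable. The cubic-vertex reduction rests on the claim that a knotless embedding of $H=\yt_v(G)$ can be surgered into a knotless embedding of $G=\ty(H)$, i.e.\ that $G$ IK forces $H$ IK. That is the assertion that the $\yt$ move preserves intrinsic knottedness, and it is false: if you place the new vertex $v$ near $a$ and route its legs along the old triangle, a cycle of $G$ through $v$ via the edges $vb$ and $vc$ corresponds in $H$ to a closed walk using the path $b$--$a$--$c$, which may revisit a vertex of the rest of the cycle and so is not a cycle of $H$ at all; its knot type is not controlled by the knotlessness of $H$. (Flapan and Naimi have exhibited explicit counterexamples to ``$\yt$ preserves IK.'') The paper only ever uses the valid direction, Lemma~\ref{lemtyIK} ($\ty$ preserves IK), and when it does perform a $\yt$ move (Lemma~\ref{lemd4}, Proposition~\ref{prop28}) it certifies that the resulting order-eight graph is IK by an independent edge count via Mader's theorem, never by descent from $G$. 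Even granting your reduction, enumerating triangles of $K_{3,3,1,1}$ up to symmetry yields only its two children $A_9$ and $B_9$; the other two order-nine members of the family (Cousins 12 and 41) are not children of $K_{3,3,1,1}$ and would be missed.

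The second gap is the minimum-degree-$\geq 4$ stage, which you correctly identify as the hard part but leave as an unbounded hand-check over degree sequences. The paper's engine for collapsing this is entirely different: since an MMIK graph is not $2$-apex (Lemma~\ref{lem2apex}), every vertex-deleted subgraph $G-a$ is not apex, hence has at least $15$ edges (forcing $\maxd{G}\le 7$, which also disposes of your degree-eight case without any coning argument) and, by the classification of MMNA graphs of size at most $16$ (Lemma~\ref{lemapex}), contains a Petersen-family minor. This pins $G-a$ (for $a$ of maximum degree $6$ or $7$) down to a short explicit list of order-eight graphs, and pins $G-a,b$ down to six nonplanar $(7,12)$ graphs when $\maxd{G}=5$; the five MMIK graphs then fall out of a finite, checkable reconstruction. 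Without this not-$2$-apex/Petersen-family sieve, your final stage cannot be completed by hand, and the ``explicit knotless embedding'' certificates you invoke for the competitors are exactly what the $2$-apex criterion supplies for free.
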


We prove Propositions~\ref{prop28} and \ref{prop22} in Sections 2 and 3 respectively.
What remains is to prove that there are no order nine MMIK graphs with
between 23 and 27 edges. For this we rely on computers. 
As described in Section 4, we have two distinct
approaches implemented in three different computer languages.
We feel that this variety of techniques and languages all pointing to the same conclusion
amount to a rather robust proof of the remaining cases. Together with 
Corollary~\ref{cor21} and  Propositions~\ref{prop28} and \ref{prop22}, 
the argument of Section 4 completes our
proof of Theorem~\ref{thmain}. We begin by gathering preliminary definitions 
and lemmas in the next section.

\section{Definitions and Lemmas}
This section collects a number of definitions and lemmas used through the rest of the paper.
For a graph $G$, $|G|$ will be the order or number of vertices and $\|G\|$ the size or number
of edges, and we frequently use the pair
$(|G|, \|G\|)$ to describe a graph.
The maximum and minimum degree among the vertices of $G$ are denoted $\maxd{G}$ and 
$\mind{G}$, respectively. We denote the complement of $G$ as $\bar{G}$. 
For vertex $a$, $N(a)$ denotes the neighborhood of $a$, meaning the 
set of vertices adjacent to $a$.
For $a, b \in V(G)$, $G-a$ and $\Gab$ are the induced graphs
on $V(G) \setminus \{a\}$ and $V(G) \setminus \{a,b\}$. A graph is said to 
be \dfn{$n$-apex}
if there is a set of $n$ or fewer vertices whose deletion makes $G$ planar. 
This generalizes the notion of \dfn{apex}, common in the literature 
and which corresponds to $1$-apex.
The abbreviation MMNA describes
graphs that are minor minimal not apex.
Similarly MMN2A graphs are those that are minor minimal for the property
not $2$-apex.

For graph $G$ containing a $3$-cycle $abc$, a $\ty$ move 
results in a graph $G'$ of equal size to $G$ but with an additional vertex $v$.
The edges of the $3$-cycle are deleted and replaced with $av$, $bv$, and $cv$.
We say that $G'$ is a \dfn{child} of $G$.
The reverse operation (delete a degree three vertex $v$ and make $N(v)$ a 
$3$-cycle) is a $\yt$ move. We generally assume that any extra, doubled edges
introduced by a $\yt$ move are deleted so that the resulting graph is, again, simple.

In this paragraph we define several named graphs used here as well
as in the two theses~\cite{Mo,R} that are the basis of Sections 2 and 4.
Graphs $H_8$, $F_9$, and $H_9$ are MMIK graphs in the $K_7$ family and 
were named by Kohara and Suzuki~\cite{KS}.
We use $A_9$ and $B_9$ to denote the two children of $K_{3,3,1,1}$. 
These are called $L_1$ and $L_2$ in~\cite{OT}, and 
are Cousins 2 and 3 in the $K_{3,3,1,1}$ family as described in~\cite{GMN}.
As with all graphs in that family, $A_9$ and $B_9$ are MMIK.

The third author's thesis~\cite{R} includes an unknotted embedding 
(due to Ramin Naimi~\cite{N}) of 
a $(9,29)$ graph that we call 260910. The complement of 260910 is
the disjoint union of a $6$-cycle, $K_2$, 
and $K_1$. It's also given by adding the edges $\{1,2\}$, $\{2,3\}$, 
and $\{2,4\}$ to the graph $G_{9,26}$ described near the
end of Section 4.

\begin{lemma}
\label{lemd3}%
If $G$ is MMIK, then $\mind{G} \geq 3$.
\end{lemma}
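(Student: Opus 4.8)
The plan is to argue by contradiction. Suppose $G$ is MMIK yet carries a vertex $v$ with $\deg(v)\le 2$. I will produce a proper minor of $G$ that is again IK, contradicting the minor minimality of $G$. The construction of this minor, and the reason it remains IK, depend on $\deg(v)$, so I would split into the cases $\deg(v)\in\{0,1\}$ and $\deg(v)=2$.

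First I dispose of $\deg(v)\in\{0,1\}$. In these cases $v$ lies on no cycle of $G$, and I claim the proper minor $G-v$ is IK. Indeed, given any embedding of $G-v$, extend it to an embedding of $G$ by reinserting $v$ (together with its one edge, if present) inside a small ball meeting nothing else. Since $G$ is IK this embedding contains a knotted cycle; as $v$ lies on no cycle, that cycle already lives in $G-v$. The embedding of $G-v$ was arbitrary, so $G-v$ is IK, contradicting minor minimality.

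The substantive case is $\deg(v)=2$, with neighbors $a$ and $b$. Let $G'$ be the minor obtained by contracting the edge $va$; concretely $G'$ is $G-v$ with the edge $ab$ added whenever it is not already present, so that $ab\in E(G')$ in every case and $|G'|=|G|-1$, making $G'$ a proper minor. I would show $G'$ is IK. Starting from any embedding of $G'$, I recover an embedding of $G$ by reinstating $v$ in a thin neighborhood of the edge $ab$ and drawing $va$ and $vb$ alongside it --- subdividing $ab$ when $ab\notin E(G)$, and running parallel to it when $ab\in E(G)$. Because $G$ is IK this embedding has a knotted cycle $\gamma$. If $\gamma$ avoids $v$ it is already a cycle of $G'$; if $\gamma$ traverses the subpath $a$--$v$--$b$, I reroute it along the adjacent edge $ab$ to obtain a cycle $\gamma'$ of $G'$. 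Thus every embedding of $G'$ contains a knotted cycle, so $G'$ is IK, again contradicting minor minimality.

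The crux is the last step of the degree-two case: the claim that rerouting $\gamma$ from the path $a$--$v$--$b$ onto the parallel edge $ab$ preserves the knot type. This is the one point where knot-theoretic rather than purely combinatorial input is needed. Since $va$, $vb$, and $ab$ are drawn to cobound a thin embedded disk, the replacement is realized by an ambient isotopy supported near that disk, so $\gamma$ and $\gamma'$ are isotopic and in particular have the same knot type. Once this local isotopy is justified, the three cases together rule out any vertex of degree at most two, giving $\mind{G}\ge 3$.
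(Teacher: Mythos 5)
Your proposal is correct and follows essentially the same route as the paper: the paper's (two-line) proof likewise deletes a vertex or contracts an edge at a vertex of degree one or two and asserts the resulting proper minor is still IK. You simply supply the justification for that assertion (reinserting the low-degree vertex into an arbitrary embedding and rerouting a knotted cycle across the thin disk), which the paper leaves implicit.
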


\begin{proof}
Suppose $G$ is IK.
If $\mind{G} \leq 2$, we can form a proper minor $H$ either by deleting a vertex, or by contracting
an edge adjacent to a vertex of degree one or two. Then $H$ is also IK, and $G$ is 
not MMIK.
\end{proof}

\begin{lemma}
\label{lemtyIK}%
If $G$ is IK and $H$ is a child of $G$, then $H$ is also IK.
\end{lemma}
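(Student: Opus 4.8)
The plan is to show that intrinsic knottedness transfers from $G$ to its child $H$ by converting an arbitrary embedding of $H$ into a nearby embedding of $G$, locating a knotted cycle there via the hypothesis that $G$ is IK, and then pulling that cycle back into $H$ without changing its knot type. Throughout, write $abc$ for the triangle of $G$ on which the \ty\ move is performed and $v$ for the new degree-three vertex of $H$, so that $H$ arises from $G$ by deleting $ab$, $bc$, $ca$ and adding $av$, $bv$, $cv$.

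First I would fix a tame embedding $f$ of $H$ in $\R^3$ and build from it an embedding $g$ of $G$. The vertices and all edges common to $G$ and $H$ are embedded exactly as in $f$; to realize the three missing triangle edges I route $ab$ along a thin pushoff of the arc $av\cup vb$, and similarly route $bc$ and $ca$ parallel to $bv\cup vc$ and $cv\cup va$. Because these new arcs hug the Y already present in $f$, they can be made disjoint from one another and from the rest of the embedding, so $g$ is a genuine embedding of $G$.

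Since $G$ is IK, the embedding $g$ contains a nontrivially knotted cycle $\gamma$. The core step is to produce an ambient isotopic cycle $\gamma'$ that lives in $H$, and here I would split into cases according to how many of the triangle edges $\gamma$ uses. If it uses none, then $\gamma$ is already a cycle of $H$ and we are done. If it uses exactly one, say $ab$, I replace that edge by the arc $av\cup vb$; as $v$ is not a vertex of $G$ it does not lie on $\gamma$, so the result is a simple cycle of $H$. If it uses exactly two, they necessarily share a vertex, say $ab$ and $bc$ meet at $b$, and I replace the subpath $ab\cup bc$ by the arc $av\cup vc$, bypassing $b$ and again yielding a cycle of $H$. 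The case of all three triangle edges cannot occur, since then $\gamma$ would be the $3$-cycle $abc$, which is unknotted.

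The main point to get right, and the one genuine obstacle, is verifying that in each case $\gamma'$ has the same knot type as $\gamma$. This holds because the rerouting arcs were chosen to lie in a thin neighborhood of the Y, so passing from $\gamma$ to $\gamma'$ is a small ambient isotopy supported near $v$; in particular knottedness is preserved. Thus every embedding of $H$ contains a knotted cycle, and $H$ is IK.
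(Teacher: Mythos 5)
Your argument is correct and is precisely the standard Sachs-type argument (build an embedding of $G$ from one of $H$ by routing each triangle edge as a pushoff of the corresponding path through $v$, then reroute the knotted cycle back into $H$ by an isotopy supported in a neighborhood of the Y); the paper simply cites Sachs's proof for intrinsic linking and notes the IK case is similar, so you have supplied the details it omits. No gaps.
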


\begin{proof}
Sachs~\cite{S} showed this for the intrinsic linking property.
The proof for IK is similar.
\end{proof}

\begin{lemma} \cite{CMOPRW}
\label{lemCMOPRW}%
If $|G| = n > 7$ and $\|G\| \geq 5n-14$, then $G$ is IK but not MMIK.
\end{lemma}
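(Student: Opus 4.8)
The plan is to produce a $K_7$ minor from the edge hypothesis and then read off both conclusions almost for free. The crucial observation is that the threshold $5n-14$ is exactly the Mader extremal function for forcing a $K_7$ minor: for $p \leq 7$, a graph on $n$ vertices with no $K_p$ minor has at most $(p-2)n - \binom{p-1}{2}$ edges, and setting $p = 7$ gives the bound $5n - 15$. Thus any $G$ with $\|G\| \geq 5n - 14 = (5n-15)+1$ edges must contain $K_7$ as a minor. So the first step I would take is simply to invoke Mader's theorem on the extremal function for $K_7$ minors to conclude that $G$ has a $K_7$ minor.

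Granting the $K_7$ minor, the intrinsic knotting follows immediately. By Conway and Gordon~\cite{CG}, $K_7$ is IK, and by~\cite{NT} the property `not IK' is closed under taking minors. Taking the contrapositive, if some minor of $G$ is IK then $G$ itself is IK. Since $K_7$ is an IK minor of $G$, we conclude that $G$ is IK, which is the first half of the statement.

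For the second half, I would exploit the strict inequality $n > 7$. The $K_7$ minor sits on $7$ vertices, whereas $G$ has $n \geq 8$ vertices, so obtaining it requires at least one vertex deletion or contraction; hence $K_7$ is a \emph{proper} minor of $G$. As it is IK, $G$ possesses a proper IK minor and therefore fails to be minor minimal, i.e.\ $G$ is not MMIK. The only genuinely nontrivial ingredient here is the Mader bound, so the main point to get right is matching the numerical threshold in the hypothesis to the extremal function for $p = 7$; once that correspondence is in place, both the IK and the non-minimality conclusions are short deductions from results already available in the excerpt.
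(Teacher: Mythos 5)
Your proposal is correct and follows essentially the same route as the paper, which simply cites Mader's theorem that a graph meeting this edge bound has a $K_7$ minor; you have merely spelled out the extremal-function computation ($5n-15$ for $p=7$) and the two short deductions (IK from the Conway--Gordon result plus minor-closure, and not MMIK from the minor being proper since $n>7$) that the paper leaves implicit.
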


\begin{proof} Mader~\cite{Ma} has shown that such a graph has a $K_7$ minor.
\end{proof}

\begin{lemma} \cite{BBFFHL, OT}
\label{lem2apex}%
If $G$ is IK, then $G$ is not $2$-apex
\end{lemma}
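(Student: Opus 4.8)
The plan is to prove the contrapositive: if $G$ is $2$-apex then $G$ admits an embedding in which every cycle is unknotted, so $G$ is not IK. Since $G$ is $2$-apex, fix vertices $a,b$ for which $\Gab$ is planar, and fix a planar embedding of $\Gab$ in the plane $\Pi=\{z=0\}\subset\R^3$. Place $a=(0,0,1)$ above $\Pi$ and $b=(0,0,-1)$ below it; draw every edge from $a$ to a neighbour in $\Gab$ as a straight segment in the upper half-space $\{z\ge 0\}$, every edge from $b$ as a straight segment in the lower half-space $\{z\le 0\}$, and, if present, the edge $ab$ as an arc meeting $\Pi$ in a single point off the planar graph. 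A small generic perturbation makes this an embedding, since each new edge meets $\Pi$ only at its planar endpoint(s).

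First I would dispose of the cycles that avoid at least one of $a,b$. A cycle missing an apex lies in a closed half-space and consists of a planar path together with at most two segments emanating from the single apex it uses; the straight segments joining that apex to the points of the planar path sweep out an embedded disk (a cone from the apex) whose boundary is the cycle, so the cycle bounds a disk and is unknotted. A cycle avoiding both apices is planar, hence trivially unknotted.

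The crux is a cycle $C$ through both $a$ and $b$, and this is where I expect the real work. Such a $C$ consists of exactly two edges at $a$ (lying above $\Pi$ except at their planar endpoints), two at $b$ (lying below $\Pi$), and one or two planar subpaths $P_1,P_2$ running in $\Pi$ (one subpath if $C$ uses $ab$, otherwise two). Projecting $C$ orthogonally to $\Pi$ gives a knot diagram, and I would analyse its crossings: because $\Gab$ is embedded in $\Pi$, the planar subpaths cross neither themselves nor each other; the two edges at $a$ share the point $a$ and the two at $b$ share $b$, so neither apex contributes a self-crossing; and at every remaining crossing each $a$-edge lies over the strand it meets while each $b$-edge lies under it. Thus the diagram stratifies into a top layer (the $a$-edges), a crossing-free middle layer $P_1\cup P_2$, and a bottom layer (the $b$-edges), with the top always over and the bottom always under. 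Lifting the top strands clear of the rest removes all of their crossings, and then pushing the bottom strands beneath removes all of theirs, leaving a crossing-free diagram; hence $C$ is the unknot.

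The main obstacle is making this last step fully rigorous: one must justify that a height-stratified diagram with one extreme layer always over, the other always under, and a crossing-free middle represents the unknot. I would formalize this either as an explicit ambient isotopy that first pulls the $a$-strands over the remainder of the diagram and then pushes the $b$-strands under it, or by exhibiting a spanning disk assembled from the two apex cones and a planar region bounded by $P_1$, $P_2$ and two auxiliary chords in $\Pi$; in the latter route, care is needed to route the chords and the region so that the disk meets $C$ only along its boundary. Once every cycle is shown unknotted, the embedding is knotless, so $G$ is not IK, which establishes the lemma.
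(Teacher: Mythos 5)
The paper gives no proof of this lemma, citing \cite{BBFFHL} and \cite{OT}; your construction --- plane embedding of $\Gab$, cone $a$ from above and $b$ from below, then show every cycle is unknotted --- is exactly the standard argument from those references, and your handling of cycles through at most one apex (cone an embedded planar arc from the apex off the plane to get an embedded spanning disk) is correct. The problem is the crux case you yourself flag, and the gap there is genuine as written. ``Lifting the top strands clear of the rest'' is a vertical isotopy: it does not change the projection, so it removes no crossings at all. What an always-over arc actually allows is a bridge replacement --- it may be slid, rel endpoints, to any other arc lying above the rest of the diagram --- but the new arc's shadow will in general still cross $P_1$, $P_2$, and the $b$-strands, so ``leaving a crossing-free diagram'' does not follow from the moves you describe; the general principle you are invoking (extreme layer always over, other extreme always under, embedded middle implies unknot) is true but is itself the thing needing proof. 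Note also that when $C$ uses the edge $ab$, your layer bookkeeping needs the lower half of $ab$ not to cross the other $b$-edge with ambiguous sign; routing $ab$ as two straight segments through a point $w$ of $\Pi$ off the graph fixes this, since segments from a common apex meet only at the apex.

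The efficient repair is the one the cited sources effectively use: forget the diagram and use the height function. Tilt $P_1$ and $P_2$ slightly so each is strictly monotone in $z$, decreasing from its $a$-side foot to its $b$-side foot (and route $ab$ monotonically). Then $z$ restricted to $C$ is Morse with exactly one local maximum (at $a$) and one local minimum (at $b$), i.e., $C$ is in $1$-bridge position, and a knot of bridge number one is trivial (Schubert); the same count handles cycles through a single apex. Alternatively, your isotopy route can be completed, but it takes real planar topology: after replacing the $a$-arc and $b$-arc by bridges over planar chords, one uses that $P_1$ and $P_2$ are two disjoint embedded arcs in the plane --- so their complement is connected and their endpoints are accessible --- to choose the two replacement chords disjoint from $P_1\cup P_2$ and from each other, which does yield an embedded projection. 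Either supplement closes the gap; without one of them, the proof is incomplete exactly at the step carrying all the content of the lemma.
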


\begin{lemma} \cite{BM}
\label{lemapex}%
A graph that is not apex has at least 15 edges. The 
graphs in the Petersen family are the only MMNA graphs
of size 16 or less.
\end{lemma}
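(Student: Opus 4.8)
The plan is to reduce the statement to an analysis of minor minimal not apex (MMNA) graphs and then to exploit the classical connection between apex graphs and linkless embeddability. Since apex is a minor closed property, every non apex graph contains an MMNA minor, and such a minor has no more edges than the original; so it suffices to understand the MMNA graphs. I would first establish, by an argument parallel to Lemma~\ref{lemd3}, that any MMNA graph $G$ satisfies $\mind{G} \geq 3$. Indeed, a vertex of degree $0$ or $1$ can be deleted and a vertex of degree $2$ can be suppressed without changing whether a given vertex deletion planarizes the graph; in each case one checks that a planarizing vertex of the reduced graph remains a planarizing vertex of $G$, so the reduced graph is again non apex, contradicting minimality. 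The bound $\mind{G} \geq 3$ gives $2\|G\| \geq 3|G|$, hence $|G| \leq \tfrac{2}{3}\|G\|$; for $\|G\| \leq 16$ this forces $|G| \leq 10$. Combined with the trivial lower bound $|G| \geq 6$ (a non apex graph must stay non planar after a vertex deletion, so it has at least six vertices), the search becomes finite.

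For the edge bound I would show that no graph with $\|G\| \leq 14$ is non apex. The order six case is immediate: if $|G| = 6$ then $G - v$ is non planar on five vertices, hence equals $K_5$ for every $v$, which forces $G = K_6$ with $15$ edges. The remaining orders $7 \leq |G| \leq 9$ (with $\mind{G} \geq 3$ and $\|G\| \leq 14$) I would dispatch by a finite case analysis on degree sequences, in each case exhibiting a planarizing vertex. Here the count $\|G - v\| \leq 3(|G| - 1) - 6$ coming from Euler's formula sharply limits how the edges can be distributed and makes the bookkeeping manageable.

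To identify the MMNA graphs of sizes $15$ and $16$ I would invoke the Robertson--Seymour--Thomas theorem: a graph is linklessly embeddable exactly when it has no minor in the Petersen family, and every apex graph is linklessly embeddable. Consequently each Petersen family graph, being intrinsically linked, is non apex; and since all seven are obtained from $K_6$ by $\ty$ and $\yt$ moves, which preserve edge count, they each have exactly $15$ edges. A short check (using that apex is minor closed, so that it is enough to test the single edge deletions and contractions) then shows each of them is in fact MMNA. Conversely, suppose $G$ is MMNA with $\|G\| \leq 16$. If $G$ had no Petersen family minor it would be linklessly embeddable; so the whole matter comes down to the claim that a linklessly embeddable graph of size at most $16$ is apex, which would contradict $G$ being non apex. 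Hence $G$ has a Petersen family minor $P$, and since $P$ is itself non apex, minor minimality forces $G = P$; in particular no MMNA graph has size $16$.

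The main obstacle is precisely this crux, namely verifying that every linklessly embeddable graph of size at most $16$ is apex. The order bound $|G| \leq 10$ renders this a finite problem, but proving it cleanly requires controlling the structure of linkless embeddings of these small, relatively dense graphs and excluding size $16$ examples that are not themselves Petersen graphs. This is where I expect the real effort to lie, and likely where a computer assisted enumeration would be needed to guarantee completeness.
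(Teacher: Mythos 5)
First, note that the paper does not prove this lemma at all: it is imported verbatim from the reference \cite{BM}, so there is no in-paper argument to measure you against. Judged on its own terms, your outline has the right skeleton --- reduce to MMNA graphs, establish $\mind{G}\ge 3$, bound the order by edge-counting, use the Robertson--Seymour--Thomas characterization to see that the Petersen family graphs are intrinsically linked (hence not apex) and all have $15$ edges, and then argue that an MMNA graph of size at most $16$ must contain, and hence equal, a Petersen family graph. The preliminary reductions (suppressing degree-$\le 2$ vertices preserves the apex property in both directions; the order-six case forcing $G=K_6$; the bound $|G|\le 10$) are all correct, modulo details you wave at but do not write out.

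The genuine gap is that both finite verifications to which you reduce the lemma are left undone, and the second one is not a technicality --- it \emph{is} the lemma. For the first sentence you still owe a case analysis showing no non-apex graph exists on $7$--$9$ vertices with $\mind{G}\ge 3$, at most $14$ edges, and every vertex-deleted subgraph nonplanar; the Euler-formula bookkeeping you mention constrains but does not by itself dispatch these cases. More seriously, your ``crux'' --- that every linklessly embeddable graph with at most $16$ edges is apex --- is, given the Robertson--Seymour--Thomas theorem, logically equivalent to the second sentence of the statement (a non-apex graph of size $\le 16$ has an MMNA minor of size $\le 16$; saying these are exactly the Petersen family graphs is saying exactly that linkless $\Rightarrow$ apex in this edge range). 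Deferring it to ``a computer-assisted enumeration would be needed'' means you have restated the theorem rather than proved it. To close the argument you would need to actually carry out that enumeration (or find a structural argument for linklessly embeddable graphs of low size), which is precisely the content supplied by \cite{BM}. You also assert without verification that each Petersen family graph is MMNA; the method you propose (check single edge deletions and contractions, using that apex is minor closed) is the right one, but the check itself is part of the proof and must be done.
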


\section{Proof of Proposition~\ref{prop28}}
In this section we summarize the proof of Proposition~\ref{prop28},
see~\cite{R} for additional detail. We begin with a lemma.

\begin{lemma}
\label{lemd4}%
If $G$ is MMIK of order nine and $\|G\| = 29$ or $30$, then $\mind{G} \geq 4$.
\end{lemma}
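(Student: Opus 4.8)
The plan is to rule out a vertex of degree exactly three, since Lemma~\ref{lemd3} already guarantees $\mind{G} \geq 3$. So I would suppose, for contradiction, that $G$ has a vertex $a$ with $\deg(a) = 3$, and then pass to the vertex-deleted subgraph $G - a$, which has order $|G-a| = 8$ and size $\|G - a\| = \|G\| - 3$. The idea is that deleting a low-degree vertex from a graph this dense leaves a graph that is still forced to be IK, which will conflict with minor minimality.

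The key step is an edge count. Since $\|G\| \in \{29, 30\}$, we have $\|G - a\| = \|G\| - 3 \in \{26, 27\}$, so in either case $\|G - a\| \geq 26 = 5\cdot 8 - 14$. As $|G-a| = 8 > 7$, Lemma~\ref{lemCMOPRW} then applies directly to $G - a$ and shows that $G - a$ is IK. It is worth noting where the density threshold really earns its keep: for $\|G\| = 30$ the graph $G - a = K_8 - e$ already contains $K_7$ outright, obtained by deleting an endpoint of the single missing edge, so the conclusion is elementary. For $\|G\| = 29$, however, $G - a$ is $K_8$ with two edges removed, and when those two edges are disjoint there is no $K_7$ \emph{subgraph}; one must instead produce a $K_7$ \emph{minor}. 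This is exactly the content that Lemma~\ref{lemCMOPRW} packages via Mader's theorem, which is why I would route the argument through the edge-count criterion rather than a direct subgraph search.

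Finally I would close with the contradiction. The graph $G - a$ is a proper minor of $G$, obtained by deleting a single vertex, so if $G$ is MMIK then $G - a$ cannot be IK; this contradicts the conclusion of the previous paragraph. Hence $G$ has no vertex of degree three, and with Lemma~\ref{lemd3} we conclude $\mind{G} \geq 4$. I do not anticipate a genuinely hard step here: the entire argument hinges on checking that removing the degree-three vertex keeps the size at or above the bound $5n-14$ for $n=8$, so the only point requiring care is recognizing that in the size-$29$ disjoint-edge case the density criterion, not a visible $K_7$, is what supplies intrinsic knottedness of $G-a$.
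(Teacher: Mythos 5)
Your proposal is correct, and it is genuinely simpler than the paper's argument. You delete the degree-three vertex $a$ outright and apply Lemma~\ref{lemCMOPRW} to $G-a$, which has order $8$ and size $\|G\|-3 \in \{26,27\} \geq 5\cdot 8 - 14$; since $G-a$ is a proper minor of $G$, this immediately contradicts minor minimality. The paper instead performs a $\yt$ move at the degree-three vertex to obtain a child $G'$ of order eight with $26 \leq \|G'\| \leq 28$, applies Lemma~\ref{lemCMOPRW} to $G'$, and then reverses with a $\ty$ move and Lemma~\ref{lemtyIK} to produce an IK graph $H$ that is a \emph{proper} subgraph of $G$ (properness requires the small observation that the edge count forces at least one edge of the triangle on $N(a)$ to be present in $G$ already). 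Your route avoids both the move machinery and that properness subtlety. What the paper's heavier template buys is uniformity with the $(9,28)$ case treated later in the same section: there a degree-three vertex gives $\|G-a\| = 25$, which falls below the threshold $26$, so direct deletion fails and the $\yt$/$\ty$ detour is actually needed; for sizes $29$ and $30$ your direct deletion suffices. Your aside about $K_8$ minus two disjoint edges having a $K_7$ minor but no $K_7$ subgraph is accurate and correctly identifies why the argument must go through Mader's minor criterion rather than a subgraph search.
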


\begin{proof}
Let $G$ be MMIK and either $(9,29)$ or $(9,30)$. By Lemma~\ref{lemd3},
$\mind{G} \geq 3$. Suppose there is a vertex of degree three and perform
the $\yt$ move on $G$ to obtain a graph $G'$ of order eight. 
After removing doubled edges, $26 \leq \|G'\| \leq 28$, and by Lemma~\ref{lemCMOPRW}, 
$G'$ is IK.  Reverse the $\yt$ move by applying a $\ty$ move to $G'$ to obtain 
the graph $H$, a proper subgraph of $G$. Then $H$ is IK by Lemma~\ref{lemtyIK},
contradicting $G$ MMIK. Therefore, $\mind{G} \geq 4$.
\end{proof}

\begin{proof} (of Proposition~\ref{prop28})
Let $|G| = 9$. By Lemma~\ref{lemCMOPRW}, if $\|G\| \geq 31$, then $G$ is not MMIK. This leaves three cases: $\|G\| =  30$,  $29$, and $28$. 

Suppose $G$ is a $(9,30)$ graph.
There are exactly 63 such graphs, four of which have $\mind{G} < 4$ and are not 
MMIK by Lemma~\ref{lemd4}. Of the remainder, 51 have the MMIK graph
$A_9$ as a subgraph. An additional five are $2$-apex and of the from $P+K_2$, 
the join with $K_2$ of a
planar triangulation on seven vertices. This leaves three graphs, two of which have 
$B_9$ as a subgraph and the last having a $K_7$ minor.

Next suppose $G$ is a $(9,29)$ graph. There are exactly 148 such graphs, of which 15 have 
$\mind{G} < 4$. The remainder include 25 graphs that are subgraphs of the
size 30 $P+K_2$ examples as well as a graph (called 260910 in~\cite{R}) that 
has an unknotted embedding although it is not $2$-apex. (The unknotted embedding 
given in~\cite{R} is due to Ramin Naimi~\cite{N}.) The other 107 graphs are IK
but not minor minimal as they admit either an $A_9$ subgraph (97 graphs) or else a $K_7$ or
$B_9$ minor (five each).

There are 344 connected $(9,28)$ graphs, of which 11 have $\mind{G} < 3$ and 39
$\mind{G} =3$. 
The analysis of \cite{R} is mainly concerned with the 294 graphs with
$\mind{G} > 3$. For example, the appendix of that paper includes drawings of all of those graphs.
As mentioned there, there are 181 IK graphs including
168 having $F_9$ subgraph, four with a $B_9$ subgraph, and a further eight with a $K_7$ minor. 
Although these 180 graphs are IK, the proper minors show that they are not MMIK. In addition, there
is a single MMIK graph, whose complement is the disjoint union of $K_2$ and a seven cycle. 
(See also \cite{GMN} where this graph is called $G_{9,28}$ and shown to be MMIK.)
The remaining 113 graphs are not IK, and all but two of these are $2$-apex. 
Those two are subgraphs of the $(9,29)$ graph 260910 which has an 
unknotted embedding.
There are a couple of typos in \cite{R}, which we correct here. There are 97 (and not 98) graphs
whose complement has three components. In case the complement consists of two isolated vertices and
a third connected component, there are 56 (and not 57) graphs with $F_9$ subgraph.

It remains to investigate the 39 graphs of minimum degree three. Let $G$ be a $(9,28)$ graph with
$\mind{G} = 3$. It's easy to see that there can be at most one degree three vertex, let's call it $a$,
and denote by $G'$ the result of a $\yt$ move at $a$. We delete any double 
edges so that $25 \leq \|G'\| \leq 28$. Suppose $G'$ is IK. Then applying a $\ty$ move to reverse the
$\yt$ move gives $H$, an IK subgraph of $G$ with $\|H \| = \|G'\|$. If $H$ is a proper subgraph of $G$,
then $G$ is not MMIK. The only other possibility is that $\|G'\| = 28$, meaning $G'$ is $K_8$. But
a $\ty$ move on $K_8$ results in a graph ($H = G$) that has a $F_9$ subgraph and is not MMIK.

So, we may assume $G'$ is not IK. By the classification of order eight IK graphs~\cite{BBFFHL,CMOPRW}, 
$G'$ is one of two graphs, both of which are $2$-apex. We'll argue $G$ must also be $2$-apex, hence not MMIK.
Note that losing three edges in the $\yt$ move means $a$ is part of a 
$K_4$ subgraph in $G$. 
In other words, $G$ is formed from $G'$ by adding, to a $3$--cycle 
$xyz$ of $G'$, vertex
$a$ and the three edges $ax$, $ay$, and $az$. The two $G'$ graphs
have, up to symmetry, three or five $3$--cycles respectively. It's easy to check
that adding $a$ and its edges to any of these results in a graph
$G$ that is again $2$-apex.
\end{proof}

\section{Proof of Proposition~\ref{prop22}}
\begin{proof} (of Proposition~\ref{prop22})
Suppose $G$ is a $(9,22)$ MMIK graph. 
By Lemma~\ref{lem2apex},
$G$ is not $2$-apex and, for any vertex $a$, $G-a$ is not apex.
By Lemma~\ref{lemapex}, $\|G-a\| \geq 15$, so $\maxd{G} \leq 7$.
Since a $4$--regular graph has only 18 edges, $\maxd{G} \geq 5$. 

Suppose
$\maxd{G} = 5$. Then $G$ has degree sequence $(5^8, 4)$ and there are
a  pair of nonadjacent degree five vertices $a$, $b$ so that $G-a,b$ is a
nonplanar $(7,12)$ graph with $\mind{G-a,b} \geq 2$. Moreover,
$G-a,b$ can have at most one degree two vertex. We next show that
we can assume $\maxd{G-a,b} = 5$.

Indeed, if not, then for every choice of vertices $a$, $b$ of degree five,
every other vertex of degree five is adjacent to at least one of $a$ and $b$.
This means that, in the complement, there are no triangles of degree five
vertices. Deleting the degree four vertex gives a $(8,18)$ graph.
The complement $\overline{G-a}$ is then a triangle free $(8,10)$ graph
of degree sequence $(3^4, 2^4)$. Contract an edge adjacent to each
degree two vertex to make a minor that is a $3$--regular $(4,6)$
multigraph, either $K_4$ or a $4$--cycle with opposing edges doubled.
Subdividing edges so as to eliminate triangles, we see that there
are only ten such $(8,10)$ graphs $\overline{G-a}$. Adding back
the degree four vertex, we find that each of the resulting graphs is $2$-apex,
contradicting that $G$ is MMIK.

\begin{figure}[htb]
\begin{center}
\includegraphics[scale=0.75]{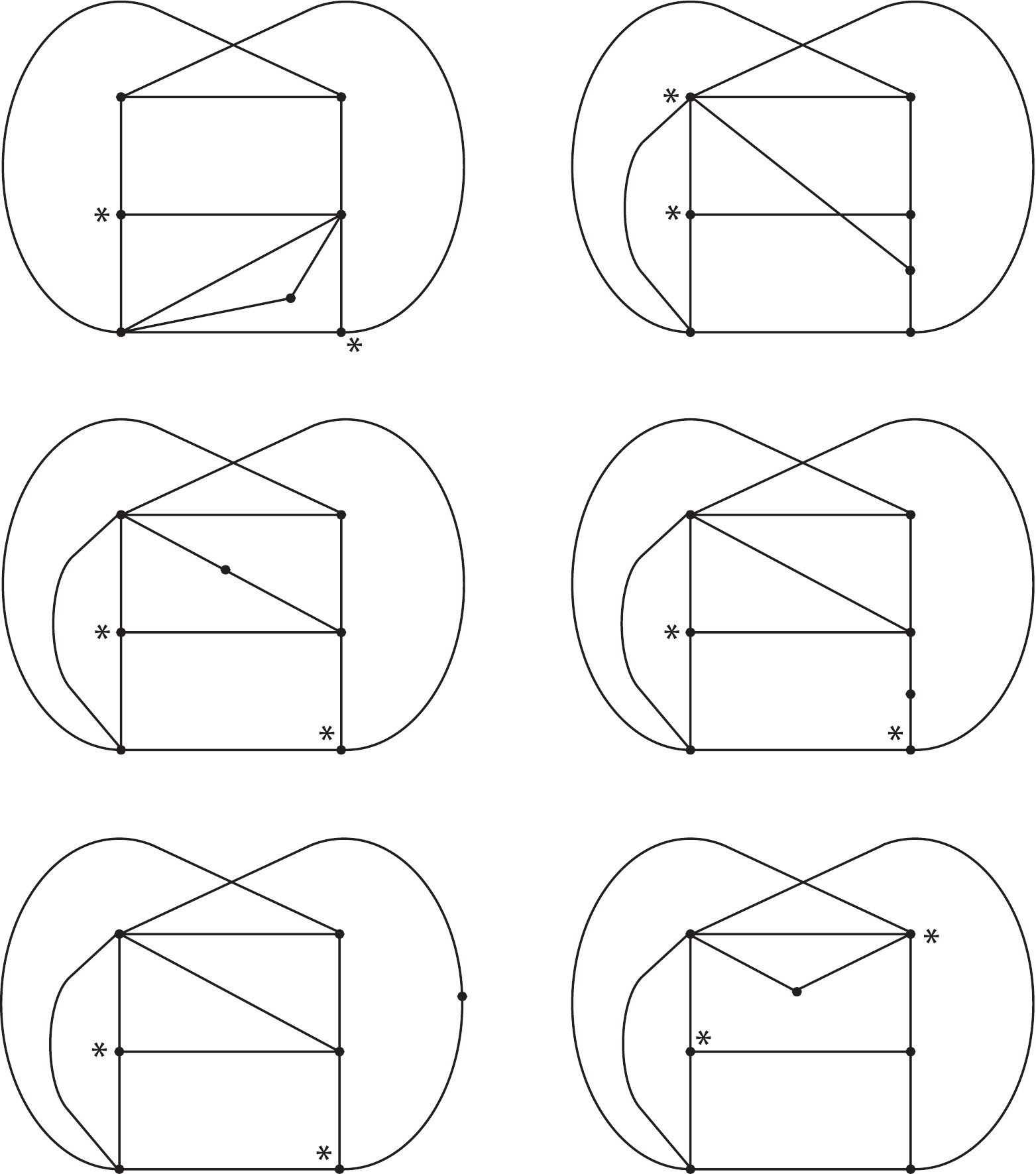}
\caption{Six non planar $(7,12)$ graphs.}
\label{fig7NP}%
\end{center}
\end{figure}

We assume, then, that $\maxd{\Gab} = 5$, $\mind{\Gab} \geq 2$, 
and that $\Gab$ is $(7,12)$ and nonplanar. 
There are six such graphs shown in Figure~\ref{fig7NP}.
In each case, deleting the two starred vertices in $G$ results in a planar graph. 
Therefore, $G$ is $2$-apex and not MMIK, a contradiction.

Next, suppose $\maxd{G} = 7$. Deleting a vertex $a$ of maximal degree leaves
an $(8,15)$ graph $G-a$ that is not apex. Since $G$ is MMIK, by
Lemma~\ref{lemd3}, $\mind{G-a} \geq 2$. Then, by 
Lemma~\ref{lemapex}, $G-a$ is one of the two $(8,15)$ Petersen family graphs,
which we denote $K_{4,4}-e$ and $P_8$. Since $a$ has degree seven,
it is adjacent to all but one vertex in $G-a$.

Suppose $G-a$ is $K_{4,4}-e$. There are two types of vertices, of degree
three and four, in $K_{4,4}-e$, and consequently $G$ is one of two 
graphs. If $a$ is adjacent to every vertex of $G-a$ but one of degree three, then
$G$ has a proper $H_9$ minor and is not MMIK. On the other hand, 
if it's a degree four vertex that is not a neighbor of $a$, then $G$ is $2$-apex
and again, not MMIK.

There are four types of vertices in $P_8$ and four graphs that can be constructed
by adding a degree seven vertex to it. One of them is $A_9$. The others are not 
MMIK, one because it has a proper $F_9$ minor, the other two being $2$-apex.

\begin{figure}[htb]
\begin{center}
\includegraphics[scale=0.5]{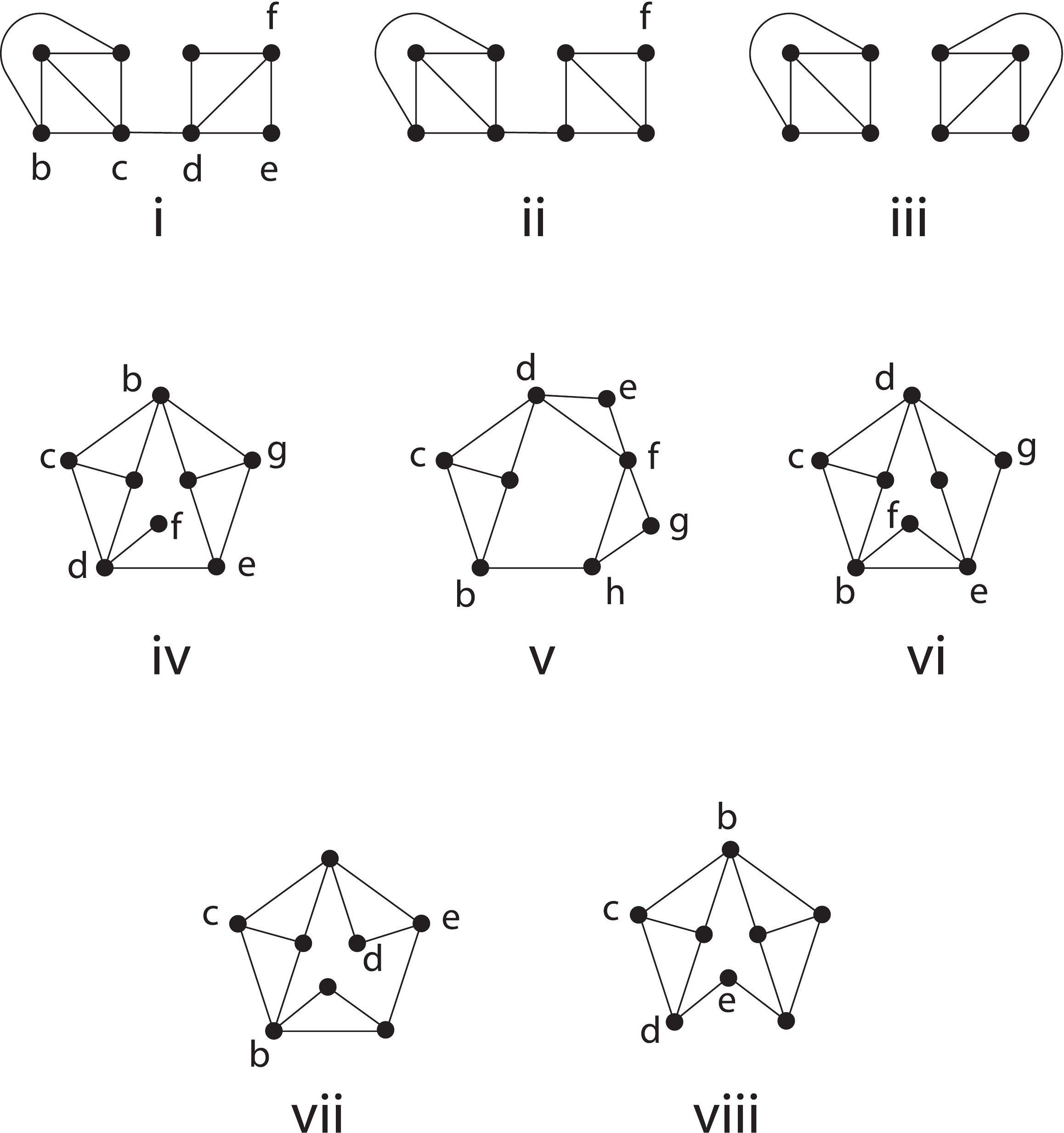}
\caption{Seven $(8,12)$ graphs formed by adding an edge to an order eight Petersen graph.}
\label{fig16NA1}%
\end{center}
\end{figure}

\begin{figure}[htb]
\begin{center}
\includegraphics[scale=0.5]{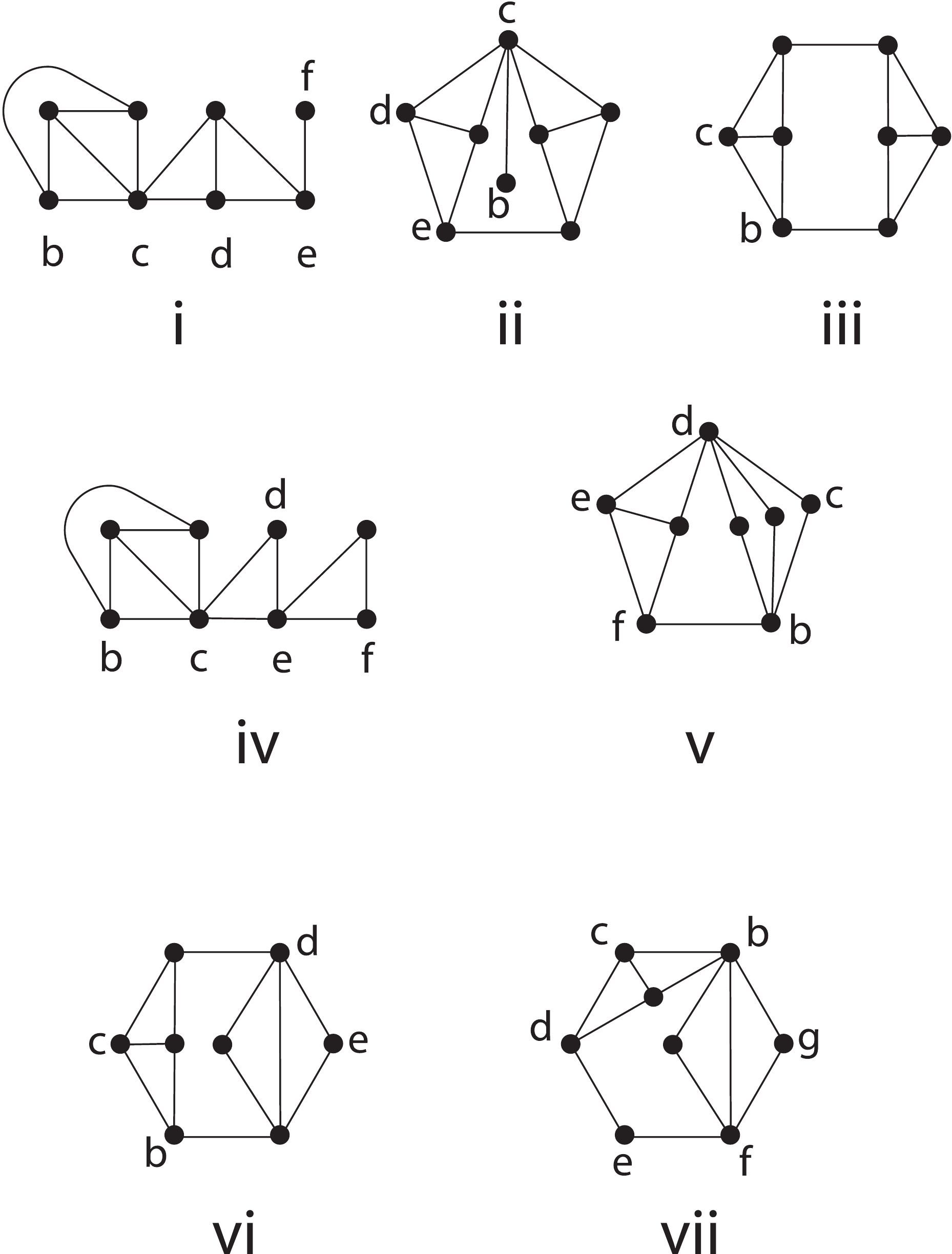}
\caption{Six $(8,12)$ graphs formed by splitting a vertex of an order seven Petersen graph.}
\label{fig16NA2}%
\end{center}
\end{figure}

In the remainder of the proof, we assume $\maxd{G} = 6$. For a vertex $a$ 
of maximal degree, $G-a$ is an $(8,16)$ not apex graph with $\maxd{G-a} \leq 6$ and
$\mind{G-a} \geq 2$. 
By Lemma~\ref{lemapex}, $G-a$ has a minor in the Petersen family.
Then $G-a$ is either one of eight graphs (see
Figure~\ref{fig16NA1} for the complements of these graphs) obtained by adding 
an edge to an eight vertex graph or one of seven (Figure~\ref{fig16NA2})
that results from splitting a vertex of an order seven Petersen family graph.

We conclude our proof by examining each of the 15 candidate $G-a$ graphs in turn.
To recover $G$, we add back the degree six vertex $a$, which is adjacent
to all but two of the vertices of $G-a$. So, in the figures, 
we label the different classes of vertex, up to symmetry, $b$, $c$, etc. 
In most cases, choosing the two vertices not in $N(a)$
results in a graph $G$ that is not IK because it is $2$-apex 
(Lemma~\ref{lem2apex})  and we'll only list the choices of vertex pair that 
produce an IK graph. Generally, this will be due to a proper IK minor,  but the 
four cases that result in a MMIK graph are Figure~\ref{fig16NA1}vii (yielding
Cousin 12 of the $K_{3,3,1,1}$ family~\cite{GMN}) and viii
($E_9+e$) as well as Figure~\ref{fig16NA2}i ($B_9$)
and iii (Cousin 41 of the $K_{3,3,1,1}$ family).

If $G-a$ is the graph of Figure~\ref{fig16NA1}i, 
there are five types of vertices. 
To get an IK graph, with proper $H_8$ minor, 
choose $\{c,d\}$ or $\{d,f\}$ as the vertex pair not in $N(a)$.

If $G-a$ is as in Figure~\ref{fig16NA1}ii,
there are again five types of vertices. 
The only case that results in an IK graph, with proper $H_9$ minor, is 
$\{c,d\} \cap N(a) = \emptyset$.

The graph of Figure~\ref{fig16NA1}iii is $K_{4,4}$ with all vertices symmetric.
However, the two that are not adjacent to $a$ are either in the same part,
in which case $G$ is $2$-apex, 
or in distinct parts, meaning $G$ has a proper $H_9$ minor. 

There are six types of vertices for the graph of Figure~\ref{fig16NA1}iv.
Except for the case where $\{d,e\} \cap N(a) = \emptyset$, which leads
to a proper $F_9$ minor, $G$ will be $2$-apex. 

There are only two vertices that share the same symmetry type (that 
of vertex $c$) for the graph of Figure~\ref{fig16NA1}v.
If $\{d,f\}$ or $\{f,h\}$ are the
pair not in $N(a)$, then $G$ has a proper $H_8$ minor. 

In the case of Figure~\ref{fig16NA1}vi, 
adding back in $a$ results in a $2$-apex graph except for two cases.
If $\{b,e\} \cap N(a) = \emptyset$, there is a proper $H_8$ minor and
if it's $d$ and $e$ that are avoided, a proper $F_9$ minor results.

For Figure~\ref{fig16NA1}vii,
there are only four types of vertices. To get an IK graph, use $\{b,e\}$ as the avoided
pair (which leads to a proper $F_9$ minor), or else the two vertices of type $e$. 
It is this last case that leads to an MMIK graph, Cousin 12 in the 
$K_{3,3,1,1}$ family of~\cite{GMN}.

For the final graph of Figure~\ref{fig16NA1},
avoiding the two vertices of type $d$ gives an IK graph that has a proper
$F_9$ minor. On the other hand, 
if $\{d,e\} \cap N(a)= \emptyset$, 
we obtain the MMIK graph $E_9+e$.

Turning to Figure~\ref{fig16NA2}, the complement of graph i
will produce a $2$-apex graph unless it's $\{e,f\}$ that are not in $N(a)$. 
In that case, we achieve the MMIK graph $B_9$.

For Figure~\ref{fig16NA2}ii,
the result is IK only in the case that $a$ is adjacent to neither vertex of type $e$, in 
which case we have the MMIK graph $A_9$. However, this graph has $\maxd{G}$ seven, 
and not six as we have been assuming.

The only way to get an IK graph from Figure~\ref{fig16NA2}iii,
by having $a$ avoid two vertices of type $b$, gives the MMIK graph
denoted Cousin 41 of of the $K_{3,3,1,1}$ family in~\cite{GMN}.

There are three ways to construct an IK graph from Figure~\ref{fig16NA2}iv.
If $\{c,e\}$ or $\{e,f\}$ are outside of $N(a)$, then $G$ will have a proper
$H_8$ minor. On the other hand, if $a$ is not adjacent to either of 
the vertices of type $f$, there will be a proper $H_9$ minor.

The IK graphs obtained from Figure~\ref{fig16NA2}v,
by avoiding $\{b,d\}$ or $\{b,f\}$, both have a proper $H_8$ minor.

The $G-a$ whose complement is Figure~\ref{fig16NA2}vi is similar.
There are two ways to achieve an IK graph, both with $H_8$ minor:
take either $\{b,d\}$ or the pair of vertices of type $d$ outside of
$N(a)$.

The final example, Figure~\ref{fig16NA2}vii,
can produce IK graphs in a number of ways. If $\{b, f\} \cap N(a)
=\emptyset$, $G$ has a proper $H_8$ minor.
If $N(a)$ avoids any two of $d$, $e$, and $f$, then $G$ has a proper $F_9$ minor.
\end{proof}

\section{Computer Verification for Size 23 through 27}
In this section we show that a graph $G$ with $|G| = 9$ and $23 \leq \|G\| \leq 27$ cannot be MMIK. We outline two approaches. The first is
found in the second author's thesis~\cite{Mo} where he implemented the algorithm in both Ruby and Java languages.
The second is based on a classification of MMN2A (minor minimal not $2$-apex) graphs~\cite{MP} achieved using Mathematica.

The idea of the first approach is to start with a listing of all graphs $G$ with 
$|G| = 9$ and $23 \leq \|G\| \leq 27$ and systematically apply a
sequence of six tests to $G$ in an effort to determine 
whether or not it is IK. In each case, applying the
test to a graph $G$ produces one of three outcomes: the graph $G$ is IK,
the graph $G$ is not IK, or the status of graph $G$ remains indeterminate.

The first three tests are based on basic facts about the order and size
of $G$. By assuming $G$ of order nine with $23 \leq \|G \| \leq 27$, 
we've already taken these constraints into consideration and these
three tests will leave $G$ indeterminate.
A fourth test, the Minor Of Classification, checks if $G$ is a minor
of certain well-known MMIK graphs. However, the implementation
only makes use of MMIK graphs on 21 and 22 edges and again
will leave our graphs, of size 23 at least, indeterminate. 

This leaves only two tests. Fortunately, they are quite effective at
sorting our graphs. The Contains Minor Classification
checks if $G$ has $K_7$, $H_8$, $F_9$, $H_9$, $K_{3,3,1,1}$,
$A_9$, or $B_9$ as a minor. If so $G$ is IK. As we are assuming $G$ has
at least 23 edges the found minor is proper and $G$ is not MMIK.
The final test,
Planarity Classification, says $G$ is not IK, hence not MMIK, if it is $2$-apex.

In \cite{Mo}, the algorithm is applied to all connected graphs of order nine, 
leaving only 32 indeterminate graphs with 24 having $23 \leq \|G\| \leq 27$. Four of
these are subgraphs of the graph 260910. Since 260910 has an 
unknotted embedding (see~\cite{R}) these four are not IK. The remaining 20 
indeterminate graphs have $E_9+e$, a 22 edge MMIK graph, as a proper minor 
and are therefore not MMIK themselves.

A second proof that there is no MMIK order nine graphs with $23 \leq \|G\| \leq 27$
is based on the classification of order nine MMN2A graphs.
Using Mathematica, in~\cite{MP} the authors show that there are a total of 12 MMN2A graphs
through order nine, including five each in the $K_7$ and $K_{3,3,1,1}$ families as well
as a $(9,26)$ and $(9,27)$ example; we'll call them $G_{9,26}$ and $G_{9,27}$.

By Lemma~\ref{lem2apex}, a MMIK graph of order nine must have a MMN2A minor. 
Most of the 12 MMN2A graphs are in fact MMIK. As shown in~\cite{GMN} every graph 
in the $K_{3,3,1,1}$ family is MMIK. Also, all but six of the graphs in the $K_7$ family 
are MMIK~\cite{GMN,HNTY}. Only one of the six exceptions has order nine or less; 
in~\cite{GMN} they call that graph $E_9$. So, to show that there are no MMIK graphs
of order nine with $23 \leq \|G \| \leq 27$, it's enough to prove the following 
proposition.

\begin{prop}
There is no MMIK graph $G$ of order nine with $23 \leq \|G \| \leq 27$ that has
$E_9$, $G_{9,26}$, or $G_{9,27}$ as a minor.
\end{prop}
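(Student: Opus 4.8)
The plan is to exploit that $E_9$, $G_{9,26}$, and $G_{9,27}$ all have order nine, the same as $G$. A minor of order nine obtained from a graph of order nine admits neither a vertex deletion nor an edge contraction, since each of those operations lowers the order; the only permissible operation is edge deletion. Hence whenever $G$ has one of the three graphs as a minor, that graph is in fact a spanning subgraph of $G$, and $G$ is recovered from it by adding edges. As $E_9$ is $(9,21)$, $G_{9,26}$ is $(9,26)$, and $G_{9,27}$ is $(9,27)$, the constraint $23 \le \|G\| \le 27$ restricts the number of added edges to between two and six in the first case, to zero or one in the second, and to exactly zero in the third.

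Two facts about non-knottedness drive the argument. First, $E_9$ is not IK: since $E_9 + e$ is MMIK~\cite{GMN}, its proper subgraph $E_9$ cannot be IK. Second, $G_{9,26}$ is not IK: as recorded in Section 1, the $(9,29)$ graph 260910 has an unknotted embedding and is obtained from $G_{9,26}$ by adding three edges, so $G_{9,26}$ inherits an unknotted embedding as a subgraph, and any subgraph of a non-IK graph is non-IK.

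With these in hand, the $G_{9,27}$ and $G_{9,26}$ cases are short. If $G$ has $G_{9,27}$ as a minor, then $G = G_{9,27}$, and it suffices to analyze this one graph: we either locate a proper IK minor of size at most $22$ (for instance a spanning $E_9 + e$, or an $F_9$, $H_9$, $A_9$, or $B_9$ minor), making $G$ IK but not minor minimal, or we realize $G_{9,27}$ as a subgraph of 260910 and conclude it is not IK. If $G$ has $G_{9,26}$ as a minor, then either $G = G_{9,26}$, which is not IK and so not MMIK, or $G = G_{9,26} + e$ for a single edge $e$; up to $\mathrm{Aut}(G_{9,26})$ there are only finitely many such $e$, and those lying among the three edges that build 260910 yield a subgraph of 260910 and hence a non-IK graph, while the remaining additions are dispatched by exhibiting a proper MMIK minor.

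The bulk of the work, and the main obstacle, is the $E_9$ case, where $G$ consists of $E_9$ together with two to six added edges. The strategy is to enumerate these edge additions up to $\mathrm{Aut}(E_9)$ and, for each resulting graph, either exhibit a proper MMIK minor of size at most $22$—one of $F_9$, $H_9$, $A_9$, $B_9$, $E_9 + e$, or a still smaller MMIK graph—so that $G$ is IK but not minor minimal, or else exhibit an unknotted embedding, again typically by realizing $G$ inside 260910. Any addition whose edge set meets the orbit of the special edge $e^{*}$ with $E_9 + e^{*}$ MMIK is immediate, since $E_9 + e^{*}$ is then a proper IK minor of $G$; the genuine content lies in covering the additions that avoid that orbit. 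Controlling this finite but sizable case analysis—identifying the automorphism orbits of edge subsets of $E_9$ and pinning down a proper MMIK minor or an unknotted embedding for each—is where the effort concentrates, and it is precisely this enumeration that the computational approaches of Section 4 carry out in full.
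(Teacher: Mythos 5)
Your setup is the same as the paper's: since $E_9$, $G_{9,26}$, and $G_{9,27}$ all have order nine, an order nine graph $G$ containing one of them as a minor must contain it as a spanning subgraph, so $G$ is obtained by adding edges, and the edge counts pin down how many. Your treatment of $G_{9,27}$ (it is a subgraph of 260910, hence not IK) and your framework for $G_{9,26}$ (finitely many single-edge additions up to symmetry, each dispatched by a proper MMIK minor or by embedding in 260910) also match the paper, though you leave the specific minors for the $G_{9,26}$ additions unnamed; the paper identifies them as $A_9$ and $B_9$ minors for the edges outside the 260910/$G_{9,28}$ orbits.

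The genuine gap is the $E_9$ case, which you correctly identify as carrying the bulk of the content and then do not resolve: you propose enumerating all additions of two to six edges up to $\mathrm{Aut}(E_9)$ and concede that "the genuine content lies in covering the additions that avoid" the orbit of the special edge $e^{*}$, deferring this to the computational work of Section 4. But the proposition is precisely the piece of the argument that is supposed to be computer-free, so punting here defeats its purpose. The missing idea is a one-step reduction: up to symmetry there are only \emph{two} orbits of edges absent from $E_9$, and adding a single edge from the non-$e^{*}$ orbit produces a $22$-edge graph with a proper $F_9$ subgraph. Hence \emph{every} one-edge extension of $E_9$ is IK (either it is $E_9+e$ or it contains $F_9$). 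Since $\|G\| \geq 23$, the graph $G$ properly contains some one-edge extension of $E_9$, and therefore has a proper IK minor ($E_9+e$ or $F_9$) and cannot be MMIK. No enumeration of multi-edge additions is needed. Without this observation (or an explicit verification of the multi-edge cases you set up), your argument for the $E_9$ case is incomplete.
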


\begin{proof}
The three MMN2A graphs mentioned in the statement, $E_9$, $G_{9,26}$, and $G_{9,27}$, are 
all of order nine. If $G$ is an order nine MMIK graph with one of these three as a minor, 
then $G$ is formed by adding edges. 

Up to symmetry, there are two types of edges missing from $E_9$. 
By adding a single edge, we either form $E_9+e$, a MMIK graph of order 22 (see~\cite{GMN}), or else a 22 edge graph with
a proper $F_9$ subgraph. To construct
$G$ with $23 \leq \|G \| \leq 27$ requires addition of further edges and any such $G$ has either $E_9+e$
or $F_9$ as a proper minor and is not MMIK.

We can describe $G_{9,26}$ by its edge list~\cite{MP}:
$$\{\{1, 4\}, \{1, 5\}, \{1, 7\}, \{1, 8\}, \{1, 9\}, \{2, 5\}, \{2, 6\}, \{2, 7\}, \{2,   8\},$$
$$   \{2, 9\}, \{3, 5\}, \{3, 6\},  \{3, 7\}, \{3, 8\}, \{3, 9\}, \{4, 6\}, \{4,   7\}, \{4, 8\}, $$
$$  \{4, 9\}, \{5, 6\}, \{5, 8\}, \{5, 9\}, \{6, 8\}, \{6, 9\}, \{7,   8\}, \{7, 9\}\} .$$
Adding edges $\{1,3\}$ and $\{2,4\}$ results in the graph $G_{9,28}$, which is MMIK~\cite{GMN}. 
This means neither $G_{9,26}$, nor any 27 edge graph formed by adding a single one of those edges 
is MMIK or even IK. It remains to investigate adding other edges to $G_{9,26}$. Up to symmetry, there are four 
other possibilities. 
Adding $\{2,3\}$ results in a graph 
that is not IK since it is a subgraph of 
the graph 260910, which has an unknotted embedding (see~\cite{R}).
Adding $\{8,9\}$ or $\{5,7\}$ gives a size 27 graph with a proper $A_9$ minor, while
the graph formed by adding $\{1,6\}$ has a proper $B_9$ minor. So, neither $G_{9,26}$ nor
any 27 edge graph formed by adding a single edge is MMIK.

The remaining possibility is that $G_{9,27}$ itself is MMIK. However, $G_{9,27}$ is a subgraph of 
the graph 260910 that is shown to have an unknotted embedding in~\cite{R}. Therefore, $G_{9,27}$ is
not IK and also not MMIK.
\end{proof}

In summary, we've described two different arguments that there is no order nine MMIK graph $G$
with $23 \leq \|G\| \leq 27$. The first approach is implemented in two different languages in~\cite{Mo}. 
The second approach instead relies on the classification of order nine 
MMN2A graphs achieved using Mathematica~\cite{MP}.
\section*{Acknowledgements}

\end{document}